\newtheorem{theorem}{Theorem}[section]
\newtheorem{corollary}[theorem]{Corollary}
\newtheorem{remark}[theorem]{Remark}
\newtheorem{lemma}[theorem]{Lemma}
\newtheorem{guess}[theorem]{Conjecture}
\begin{document}

\title{Ekeland-Hofer-Zehnder capacities of lagrangian products with special forms}
\date{May, 2024}
\author{Kun Shi
\thanks{Corresponding author
\endgraf\hspace{2mm} 2020 {\it Mathematics Subject Classification.}
 53D05, 53C23 (primary), 70H05, 57R17 (secondary).}}
 \maketitle \vspace{-0.3in}



\abstract{
In this paper, we give some estimations for Ekeland-Hofer-Zehnder capacities of lagrangian products with special forms through combinatorial formulas. Based on these estimations, we give some interesting corollaries. } \vspace{-0.1in}
\medskip\vspace{12mm}

\maketitle \vspace{-0.5in}


\tableofcontents

\section{Introduction and results}
\setcounter{equation}{0}

Recently, P. Haim-Kislev \cite{PH19} introduced a combinatorial formula for the Ekeland-Hofer-Zehnder capacity of a convex polytope in $\mathbb{R}^{2n}$ and proved a certain subadditivity property of this capacity as an application of this formula. Prompted by this, we give some estimations for Ekeland-Hofer-Zehnder capacity of special convex body through combinatorial formula. We also have some result obtained by combination formula which can be found in \cite{KS24, KS241}.

Let $\omega_0$ be the standard symplectic form on $\mathbb{R}^{2n}$ and $J_{2n}$ be the standard complex structure on $\mathbb{R}^{2n}$. Our first result is as following.
\begin{theorem}\label{th:JK}
Let $K\subset(\mathbb{R}^{2n}, \omega_0)$ be a convex body, then
$$c_{\rm EHZ}(K\times_L J_{2n}K)\leqslant 2c_{\rm EHZ}(K).$$
\end{theorem}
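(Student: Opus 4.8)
The plan is to reduce to polytopes and then play Haim-Kislev's combinatorial formula \cite{PH19} for $K$ off against the one for $K\times_L J_{2n}K$. Since $c_{\rm EHZ}$ is translation invariant and $(K+v)\times_L J_{2n}(K+v)=(K\times_L J_{2n}K)+(v,J_{2n}v)$, we may assume $0\in\operatorname{int}K$; and since $c_{\rm EHZ}$ is continuous on convex bodies in the Hausdorff topology (e.g.\ from monotonicity together with $2$-homogeneity under scaling) and $K\mapsto K\times_L J_{2n}K$ is Hausdorff-continuous, it suffices to prove the inequality when $K=\{x\in\R^{2n}:\langle x,n_i\rangle\le 1,\ i=1,\dots,m\}$ is a polytope with $0$ in its interior. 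Because $J_{2n}^{\top}=-J_{2n}$ and $(-J_{2n})^{-1}=J_{2n}$, we have $J_{2n}K=\{y:\langle y,J_{2n}n_i\rangle\le 1\}$, so $P:=K\times_L J_{2n}K\subset\R^{4n}=\R^{2n}\times\R^{2n}$ is the polytope cut out by the $2m$ halfspaces with outer facet normals $N_i:=(n_i,0)$ and $N_{m+i}:=(0,J_{2n}n_i)$, $i=1,\dots,m$, and $0\in\operatorname{int}P$.

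The crux is the $\Omega$-pairing of these normals, $\Omega=\sum_k dx_k\wedge dy_k$ being the Lagrangian-product symplectic form on $\R^{4n}$: one computes $\Omega(N_i,N_j)=\Omega(N_{m+i},N_{m+j})=0$ for all $i,j$, while $\Omega(N_i,N_{m+j})=\langle n_i,J_{2n}n_j\rangle=\pm\,\omega_0(n_i,n_j)$ with $\Omega(N_{m+j},N_i)$ of the opposite sign, so only the ``mixed'' pairs contribute to Haim-Kislev's sum for $P$. Now fix an optimiser $(\beta^\ast,\sigma^\ast)$ for $K$; after relabelling the $n_i$ we take $\sigma^\ast=\mathrm{id}$, so $\beta^\ast_i\ge 0$, $\sum_i\beta^\ast_i=1$, $\sum_i\beta^\ast_i n_i=0$ and $M_K:=\sum_{1\le j<i\le m}\beta^\ast_j\beta^\ast_i\,\omega_0(n_j,n_i)=\tfrac1{2c_{\rm EHZ}(K)}$. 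As a competitor for $P$ take the weights $\gamma_i=\gamma_{m+i}=\tfrac12\beta^\ast_i$ --- nonnegative, summing to $1$, and with $\sum_l\gamma_l N_l=\bigl(\tfrac12\sum_i\beta^\ast_i n_i,\ \tfrac12 J_{2n}\sum_i\beta^\ast_i n_i\bigr)=0$ --- together with the total order on the $2m$ normals obtained by interleaving the two blocks as $N_{m+1},N_1,N_{m+2},N_2,\dots,N_{m+m},N_m$. Since the ``diagonal'' mixed pairs $\{N_i,N_{m+i}\}$ contribute $0$ (because $\omega_0(n_i,n_i)=0$), a direct computation shows that the Haim-Kislev sum for $P$ with this data equals $\pm\tfrac12 M_K$, the sign being fixed by the orientation of the order; choosing that orientation appropriately gives a competitor of value $\tfrac12 M_K$. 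Hence the maximum $M_P=\tfrac1{2c_{\rm EHZ}(P)}$ in Haim-Kislev's formula for $P$ satisfies $M_P\ge\tfrac12 M_K=\tfrac1{4c_{\rm EHZ}(K)}$, i.e.\ $c_{\rm EHZ}(P)\le 2\,c_{\rm EHZ}(K)$; passing back through the Hausdorff approximation completes the proof.

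The one genuinely delicate step is this lift of the optimiser, where the choice of total order matters. The naive ``block'' order $N_{m+1},\dots,N_{2m},N_1,\dots,N_m$ gives every mixed term the same sign and the sum collapses to $\omega_0\bigl(\sum_i\beta^\ast_i n_i,\sum_i\beta^\ast_i n_i\bigr)=0$; the interleaving is exactly what makes the partial sums reassemble, up to the factor $\tfrac14$ from the halved weights and an overall orientation sign, the triangular sum $\sum_{j<i}\beta^\ast_j\beta^\ast_i\,\omega_0(n_j,n_i)$ defining $M_K$. Besides this, the only points needing care are the continuity statements underlying the polytope reduction and the routine identification of the facets of the $4n$-dimensional polytope $P$; everything else is Haim-Kislev's formula applied to $K$ and to $P$.
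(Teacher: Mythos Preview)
Your argument is correct and follows essentially the same route as the paper: reduce to polytopes by Hausdorff continuity, lift an optimiser $(\beta^\ast,\sigma^\ast)$ for $K$ in Haim-Kislev's formula to a competitor for $K\times_L J_{2n}K$ by halving the weights and interleaving the two blocks of normals, and verify that the resulting combinatorial sum equals $\tfrac12 M_K$. The paper carries out the interleaved computation explicitly (splitting into the four parity cases and using $\tilde\omega_0((n_i,0),(0,J_{2n}n_j))=-\omega_0(n_i,n_j)$) rather than leaving it as ``a direct computation'' with a sign to be fixed by reversing the order, but the content is the same.
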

 If $n=1$, then we can get the following corollary.
\begin{corollary}\label{cor:1}
Let $K\subset(\mathbb{R}^{2}, \omega_0)$ be a convex body, then
$$c_{\rm EHZ}(K\times_L J_2K)\leqslant 2{\rm Vol}(K), $$
where ${\rm Vol}(K)$ denotes the Euclidean volume of $K$.
\end{corollary}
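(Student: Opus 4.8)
The plan is to combine Theorem~\ref{th:JK} in the case $n=1$ with the classical computation of the Ekeland--Hofer--Zehnder capacity of a planar convex body. Specializing Theorem~\ref{th:JK} to $n=1$ gives at once
$$c_{\rm EHZ}(K\times_L J_2K)\leqslant 2c_{\rm EHZ}(K),$$
so the corollary reduces to the identity $c_{\rm EHZ}(K)=\mathrm{Vol}(K)$ for every convex body $K\subset(\mathbb{R}^2,\omega_0)$.

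To establish this identity I would first treat the case in which $\partial K$ is smooth and strictly convex. Then the characteristic line bundle of the hypersurface $\partial K\subset\mathbb{R}^2$ is one-dimensional, and its single leaf is $\partial K$ itself, traversed once with the boundary orientation. The action of this closed characteristic is $\tfrac12\oint_{\partial K}(x\,dy-y\,dx)$, which by Green's theorem equals $\mathrm{Vol}(K)$; since it is the only closed characteristic on $\partial K$, we get $c_{\rm EHZ}(K)=\mathrm{Vol}(K)$. (Equivalently, one may invoke that $c_{\rm EHZ}$ agrees with the Hofer--Zehnder capacity on convex bodies and that the Hofer--Zehnder capacity of a convex domain in $\mathbb{R}^2$ is its Lebesgue measure.)

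For a general convex body $K$, whose boundary need not be smooth, I would pass to the limit through smooth approximations: pick smooth strictly convex bodies $K^-_\varepsilon\subset K\subset K^+_\varepsilon$ with $\mathrm{Vol}(K^\pm_\varepsilon)\to\mathrm{Vol}(K)$ as $\varepsilon\to0$. Monotonicity of $c_{\rm EHZ}$ under inclusion together with the smooth case yields
$$\mathrm{Vol}(K^-_\varepsilon)=c_{\rm EHZ}(K^-_\varepsilon)\leqslant c_{\rm EHZ}(K)\leqslant c_{\rm EHZ}(K^+_\varepsilon)=\mathrm{Vol}(K^+_\varepsilon),$$
and letting $\varepsilon\to0$ gives $c_{\rm EHZ}(K)=\mathrm{Vol}(K)$. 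Substituting into the inequality coming from Theorem~\ref{th:JK} finishes the proof.

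The only real obstacle is a bookkeeping one: one must make sure the normalization conventions for $c_{\rm EHZ}$ (the factor $\tfrac12$ in the action, the orientation of $\partial K$, and hence the value $c_{\rm EHZ}(B^2)=\pi$ on the Euclidean unit disk) match those underlying the combinatorial formula of \cite{PH19} and Theorem~\ref{th:JK}. Once the normalization is pinned down, the argument is just the one-line specialization of Theorem~\ref{th:JK} followed by a routine approximation computation.
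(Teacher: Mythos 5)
Your proposal is correct and follows essentially the same route as the paper: the paper obtains the corollary simply by specializing Theorem~\ref{th:JK} to $n=1$ and invoking (implicitly) the standard fact that for a planar convex body $c_{\rm EHZ}(K)={\rm Vol}(K)$, since $\partial K$ is the unique closed characteristic and its action $\tfrac12\oint_{\partial K}(x\,dy-y\,dx)$ equals the area. Your write-up merely makes that classical two-dimensional identity (and the smoothing/monotonicity approximation behind it) explicit, which is consistent with the normalization $c_{\rm EHZ}(B^2(1))=\pi$ used in the paper.
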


About the relationship between symplectic capacities and volume of convex bodies, Viterbo\cite{Vi00} proposed an influential conjecture in symplectic geometry, which is called Viterbo¡¯s volume-capacity conjecture.

\begin{guess}[Viterbo \cite{Vi00}]\label{conj:viterbo}
{\rm On $\mathbb{R}^{2n}$, for any normalized symplectic capacity $c$ and any bounded convex domain $D$ there holds
\begin{equation}\label{e:viterbo}
\frac{c(D)}{c(B^{2n}(1))}\le \left(\frac{{\rm Vol}(D)}{{\rm Vol}(B^{2n}(1))}\right)^{1/n}
\end{equation}
(or equivalently $(c(D))^n\le n!{\rm Vol}(D)$),
with equality if and only if $D$ is symplectomorphic to the Euclidean ball. }
\end{guess}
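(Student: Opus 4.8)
The plan is to separate the two pieces of content in \eqref{e:viterbo}---a universal upper bound and a rigidity statement---and to locate exactly where the combinatorial formula can and cannot help. Both sides are monotone under symplectic inclusion, invariant under symplectomorphisms, and homogeneous of the right degrees, and the normalization gives $(c(B^{2n}(1)))^n=\pi^n=n!\,{\rm Vol}(B^{2n}(1))$, so the round ball realizes equality and is the natural extremizer. Since every normalized capacity $c$ is dominated by the cylindrical capacity $c_Z$---the largest normalized capacity, equal to the infimum of $\pi r^2$ over symplectic embeddings of $D$ into the cylinder $Z^{2n}(r)$---the inequality \eqref{e:viterbo} for \emph{all} normalized $c$ is equivalent to the single bound $(c_Z(D))^n\le n!\,{\rm Vol}(D)$. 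I would therefore first try to prove this extremal inequality, and then reduce it to the Ekeland--Hofer--Zehnder capacity via the (conjectural) coincidence $c_{\rm EHZ}=c_Z$ on convex bodies, which is the only place where the formula of \cite{PH19} directly applies.

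For the core estimate $(c_{\rm EHZ}(D))^n\le n!\,{\rm Vol}(D)$ I would pass to convex polytopes $P\to D$ and insert Haim-Kislev's combinatorial formula for $c_{\rm EHZ}(P)$---the optimum over orderings of the facet data of a sum of symplectic pairings $\omega_0(\cdot,\cdot)$ of the weighted outer normals---into the volume, which the cone/divergence decomposition writes as a mixed expression in the same normals. The target then becomes a purely combinatorial ``symplectic isoperimetric'' inequality: for the maximizing ordering, the antisymmetric pairing sum must be dominated by $n!\,{\rm Vol}(P)$, and I would attack it with a rearrangement or correlation argument pinning the extremal ordering to a monotone configuration, bootstrapped through the subadditivity already exploited for Theorem~\ref{th:JK}. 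The Lagrangian-product bound $c_{\rm EHZ}(K\times_L J_{2n}K)\le 2c_{\rm EHZ}(K)$ and Corollary~\ref{cor:1} supply genuine, if non-sharp, control on precisely the product bodies that serve as test cases.

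The rigidity clause and the sharp constant $n!$ are where I expect the argument to stall, and the difficulty is structural rather than technical. On the special forms of the title, the Lagrangian products $K\times_L T$, one has ${\rm Vol}(K\times_L T)={\rm Vol}(K)\,{\rm Vol}(T)$ while $c_{\rm EHZ}(K\times_L T)$ is the minimal action of a closed characteristic; by the Artstein-Avidan--Karasev--Ostrover correspondence, \eqref{e:viterbo} restricted to such products is \emph{equivalent} to Mahler's volume-product conjecture together with its (Hanner-type, non-smooth) extremizers. Thus even the product case already contains an open problem of convex geometry, and ``$D$ symplectomorphic to a ball'' is strictly stronger still. Worse, the reduction to $c_{\rm EHZ}$ relies on the coincidence $c_{\rm EHZ}=c_Z$ on all convex bodies, and the available evidence is that this coincidence---and hence \eqref{e:viterbo} in full strength for every normalized capacity---cannot hold in general, which is the genuine obstruction to this route. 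For these reasons I would present \eqref{e:viterbo} as a conjecture illuminated and tested by the combinatorial formula rather than one the formula can close, restricting unconditional statements to the two-dimensional case, where every convex body already saturates \eqref{e:viterbo}, and to the Lagrangian-product regime, where Theorem~\ref{th:JK} and Corollary~\ref{cor:1} give explicit bounds.
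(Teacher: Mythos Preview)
There is nothing to compare your proposal against: the statement labeled \ref{conj:viterbo} is a \emph{conjecture}, not a theorem, and the paper does not attempt to prove it. It is quoted from Viterbo \cite{Vi00} purely as context, and the very next remark records that Haim-Kislev and Ostrover \cite{HO24} have produced a counterexample. So the inequality \eqref{e:viterbo} is in fact \emph{false} in the generality stated, and no proof---combinatorial or otherwise---can exist.

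Your proposal partially acknowledges this in its last paragraph, but the framing as a ``proof plan'' is misguided from the start. Several of the reductions you sketch are correct folklore (dominance by the cylindrical capacity, the Artstein-Avidan--Karasev--Ostrover link to Mahler), but the central step---a ``symplectic isoperimetric'' inequality yielding $(c_{\rm EHZ}(P))^n\le n!\,{\rm Vol}(P)$ from Haim-Kislev's formula via rearrangement---cannot succeed, precisely because the counterexample in \cite{HO24} is itself a convex polytope analyzed through that same formula. The right response here is simply to recognize the statement as a conjecture (now known to fail) and not to propose a proof at all.
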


\begin{remark}
{\rm In \cite{HO24} P. Haim-Kislev and Y. Ostrover give a counterexample to Viterbo's volume-capacity conjecture. This implies that $1$ is not the uper bound of
$$\frac{c(K)}{({n!{\rm Vol}(K)})^{1/n}}. $$
Our Corollary~\ref{cor:1} gives a upper bound $\sqrt{2}$ of $$\frac{c(K)}{\sqrt[1/n]{n!{\rm Vol}(K)}}$$ under the condition of Corollary~\ref{cor:1}.
}
\end{remark}

Our second result is about special convex polytopes with form $K\times_L K$.
\begin{theorem}\label{th:K}
If $K\subset(\mathbb{R}^{2n}, \omega_0)$ is a convex polytope with $(2n-1)$-dimensional facets $\{F_i\}_{i=1}^N$,
   $n_i$ is the unit outer normal to $F_i$, and  $h_i = h_K(n_i)$ the ``oriented height" of $F_i$ given by
  the support function of $K$, $h_K(y) := \sup_{x \in K} \langle x,y \rangle$. Then,
  $$c_{\rm EHZ}(K\times_L K)\leqslant2\sum_{i=1}^{N}
h_{i}^2.$$

\end{theorem}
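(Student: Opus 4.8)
The plan is to apply P. Haim-Kislev's combinatorial formula \cite{PH19} for the Ekeland-Hofer-Zehnder capacity of a convex polytope directly to the product polytope $K\times_L K$, and then to evaluate that formula on a single, extremely simple admissible configuration. Write $K=\bigcap_{i=1}^{N}\{x\in\mathbb{R}^{2n}:\langle x,n_i\rangle\leqslant h_i\}$; after a translation we may assume $0\in\Int(K)$, so that $h_i=h_K(n_i)>0$ for every $i$. Realize $K\times_L K$ as a convex polytope in $(\mathbb{R}^{4n},\Omega)$, where $\mathbb{R}^{4n}=\mathbb{R}^{2n}_X\oplus\mathbb{R}^{2n}_Y$ and $\Omega\big((X_1,Y_1),(X_2,Y_2)\big)=\langle X_1,Y_2\rangle-\langle X_2,Y_1\rangle$ is the symplectic form for which the two factors are dual Lagrangian subspaces. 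Then $K\times_L K$ is cut out by the $2N$ facets with unit outer normals $\tilde n_i:=(n_i,0)$ and $\hat n_i:=(0,n_i)$, $i=1,\dots,N$, the facet with normal $\tilde n_i$ (resp. $\hat n_i$) having oriented height $h_i$.

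The key structural observation is that, for a product, only ``cross'' symplectic pairings are nonzero: from the explicit form of $\Omega$ one reads off $\Omega(\tilde n_i,\tilde n_j)=\Omega(\hat n_i,\hat n_j)=0$ and $\Omega(\tilde n_i,\hat n_j)=\langle n_i,n_j\rangle$, so in particular $|\Omega(\tilde n_k,\hat n_k)|=|n_k|^2=1$ for every $k$. Haim-Kislev's theorem expresses $c_{\rm EHZ}(K\times_L K)$ as the reciprocal of a maximum, taken over orderings $\sigma$ of the $2N$ facets and over weights $\beta\geqslant0$ normalized by $\sum$(weight)$\cdot$(height)$=1$, of a quadratic expression $\sum_{\ell<m}\pm\,\beta_{\sigma(\ell)}\beta_{\sigma(m)}\Omega(n_{\sigma(\ell)},n_{\sigma(m)})$ whose signs are dictated by $\sigma$. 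Any single admissible choice of $(\sigma,\beta)$ therefore yields an upper bound for $c_{\rm EHZ}(K\times_L K)$.

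Now fix an index $k$ and take the weight vector assigning $\tfrac{1}{2h_k}$ to each of the two facets $\tilde n_k$ and $\hat n_k$ and $0$ to all others; this is admissible because $\tfrac{1}{2h_k}h_k+\tfrac{1}{2h_k}h_k=1$. Choose a permutation that orders $\tilde n_k$ and $\hat n_k$ so that their contribution enters with the $+$ sign (the positions of the remaining $2N-2$ facets are irrelevant, their weights being zero). The quadratic expression then collapses to the single term $\tfrac{1}{2h_k}\cdot\tfrac{1}{2h_k}\cdot|\Omega(\tilde n_k,\hat n_k)|=\tfrac{1}{4h_k^2}$, whence $c_{\rm EHZ}(K\times_L K)\leqslant 4h_k^2$ for every $k$. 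Choosing $k$ with $h_k=\min_i h_i$ and using that $K$ is a bounded full-dimensional polytope in $\mathbb{R}^{2n}$, so $N\geqslant 2n+1\geqslant 2$, we get $4\min_i h_i^2\leqslant \tfrac{4}{N}\sum_{i=1}^N h_i^2\leqslant 2\sum_{i=1}^N h_i^2$, which proves the theorem (and in fact gives the stronger estimate $c_{\rm EHZ}(K\times_L K)\leqslant 4\min_i h_i^2$).

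The main obstacle is the careful bookkeeping inside Haim-Kislev's formula rather than any serious analytic difficulty: one must pin down the exact normalization of the formula (so that the constant coming out is $2$, not $4$ or $\tfrac12$), verify precisely that for a product polytope the terms $\Omega(\tilde n_i,\hat n_i)$ can be isolated, and confirm that a permutation always exists making the relevant single term contribute positively — equivalently, that the maximum over $\sigma$ of the quadratic expression dominates its absolute value at our chosen $\beta$. Once the two-facet configuration above is checked to be admissible, everything else is routine. A secondary point to spell out is the reduction to $0\in\Int(K)$: a translation does not change $c_{\rm EHZ}(K\times_L K)$, and one should note that the resulting (positive) heights still satisfy the stated inequality, or simply record the theorem under the standing hypothesis that $K$ contains the origin in its interior.
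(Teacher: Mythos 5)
Your configuration is not admissible, and this breaks the proof. In Haim--Kislev's formula (Lemma~\ref{le:EHZ}) the set $M(\cdot)$ imposes \emph{two} linear constraints on the weights: the normalization $\sum_i\beta_ih_i=1$ that you checked, and also the closing condition $\sum_i\beta_in_i=0$, which you omitted when you paraphrased the formula ("weights $\beta\geq 0$ normalized by $\sum(\text{weight})\cdot(\text{height})=1$"). For your two-facet choice on $K\times_LK$, the weighted sum of normals is
$$\tfrac{1}{2h_k}(n_k,0)+\tfrac{1}{2h_k}(0,n_k)=\tfrac{1}{2h_k}(n_k,n_k)\neq 0,$$
since $n_k$ is a unit vector. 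So this weight vector does not lie in $M(K\times_LK)$, the formula cannot be evaluated at it, and the bound $c_{\rm EHZ}(K\times_LK)\leq 4h_k^2$ (hence your claimed stronger estimate $4\min_ih_i^2$) is not established. The closing condition is not a technicality: it encodes that the corresponding generalized characteristic on $\partial(K\times_LK)$ closes up, and concentrating weight on a single pair of parallel "cross" facets genuinely violates it.

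The repair is essentially forced, and it is what the paper does: start from an admissible pair $((\beta_i),\sigma)\in M(K)$ for $K$ itself --- which already satisfies $\sum_i\beta_in_i=0$ --- and push it to $K\times_LK$ by assigning weight $\tfrac12\beta_i$ to both facets $(n_i,0)$ and $(0,n_i)$, interleaving the two copies in the permutation. Then $\sum_i\tfrac12\beta_i(n_i,0)+\sum_i\tfrac12\beta_i(0,n_i)=\tfrac12(\sum_i\beta_in_i,\sum_i\beta_in_i)=0$, so admissibility holds; the cross terms in the quadratic form telescope to $\tfrac14\sum_i\beta_i^2$, and Cauchy--Schwarz against $\sum_i\beta_ih_i=1$ gives $\tfrac14\sum_i\beta_i^2\geq\tfrac14\bigl(\sum_ih_i^2\bigr)^{-1}$, whence the stated bound $2\sum_ih_i^2$. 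Your structural observations (only cross pairings $\Omega((n_i,0),(0,n_j))=\langle n_i,n_j\rangle$ survive, the heights of the product facets are the $h_i$, translation invariance) are all correct and reusable; the missing idea is that the test weights must come from a globally closing configuration rather than from a single facet pair.
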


In fact, the Ekeland-Hofer-Zehnder capacities are invariant under translation. Thus, we can make any interior point be the coordinate origin. In this case, $h_i$ is the distance from this interior point to $F_i$. Therefore, Theorem~\ref{th:K} can imply $c_{\rm EHZ}(K\times_L K)$ may be controlled by twice the minimum value of the sum of squared distances from any point within this polytope to each face. This result is stated as follows.

\begin{corollary}
If $K\subset(\mathbb{R}^{2n}, \omega_0)$ is a convex polytope with $(2n-1)$-dimensional facets $\{F_i\}_{i=1}^N$. Then,
\begin{equation}\label{e:Con}
c_{\rm EHZ}(K\times_L K)\leqslant2\min_{x\in K}\sum_{i=1}^{N}\left({\rm dist}(x, F_i)\right)^2, 
\end{equation}
where ${\rm dist}(x, F_i)$ is the Euclidean distance from $x$ to the hyperplane containing the face $F_i$.
\end{corollary}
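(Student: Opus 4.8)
The plan is to derive the corollary directly from Theorem~\ref{th:K} by exploiting the translation invariance of the Ekeland-Hofer-Zehnder capacity together with a suitable choice of origin. Fix an arbitrary interior point $p$ of $K$; since $c_{\rm EHZ}$ is invariant under translations of $\mathbb{R}^{2n}$ (and the lagrangian product construction $K\times_L K$ transforms covariantly under simultaneous translation of both factors), we may translate so that $p$ becomes the coordinate origin. With this choice, for each facet $F_i$ with unit outer normal $n_i$, the oriented height $h_i = h_K(n_i) = \sup_{x\in K}\langle x, n_i\rangle$ equals exactly the signed distance from the origin (i.e.\ from $p$) to the hyperplane $H_i$ containing $F_i$; because $p$ is interior and $n_i$ is the \emph{outer} normal, this quantity is positive and hence coincides with the Euclidean distance $\mathrm{dist}(p, F_i) = \mathrm{dist}(p, H_i)$. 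Therefore Theorem~\ref{th:K}, applied in these translated coordinates, gives
\[
c_{\rm EHZ}(K\times_L K) \leqslant 2\sum_{i=1}^{N} h_i^2 = 2\sum_{i=1}^{N}\left(\mathrm{dist}(p, F_i)\right)^2.
\]

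The next step is to observe that the left-hand side does not depend on the choice of $p$, whereas the right-hand side does; so I would take the infimum over all admissible $p$. A priori $p$ ranges over the interior of $K$, but the function $x\mapsto \sum_{i=1}^N (\mathrm{dist}(x, F_i))^2$ is continuous (indeed a convex quadratic polynomial in $x$, being a finite sum of squares of affine functions $x\mapsto h_i - \langle x, n_i\rangle$) on all of $\mathbb{R}^{2n}$, and $K$ is compact, so the infimum over $\mathrm{int}(K)$ and the minimum over $K$ coincide by continuity. Hence
\[
c_{\rm EHZ}(K\times_L K) \leqslant 2\inf_{p\in \mathrm{int}(K)}\sum_{i=1}^{N}\left(\mathrm{dist}(p, F_i)\right)^2 = 2\min_{x\in K}\sum_{i=1}^{N}\left(\mathrm{dist}(x, F_i)\right)^2,
\]
which is precisely \eqref{e:Con}.

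The only genuine point requiring care — and what I would single out as the main (albeit minor) obstacle — is the verification that translating the convex body commutes appropriately with the $\times_L$ construction and leaves $c_{\rm EHZ}$ unchanged, so that the hypotheses of Theorem~\ref{th:K} are legitimately met after the translation: one must confirm that $h_i$ in Theorem~\ref{th:K} is measured with respect to \emph{whatever} origin one has chosen, which is exactly the content of the sentence preceding the corollary in the excerpt. Granting that, the argument is essentially a one-line optimization over the basepoint, and the continuity/compactness remark handles the passage from interior infimum to minimum over the closed polytope. No new symplectic input beyond Theorem~\ref{th:K} is needed.
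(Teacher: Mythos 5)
Your proposal is correct and follows essentially the same route as the paper: the paper derives the corollary from Theorem~\ref{th:K} via translation invariance of $c_{\rm EHZ}$, choosing an arbitrary interior point as the origin so that $h_i={\rm dist}(x,F_i)$, and then minimizing over that point. Your additional remarks (covariance of $K\times_L K$ under simultaneous translation, and passing from the infimum over ${\rm int}(K)$ to the minimum over $K$ by continuity and compactness) only make explicit what the paper leaves implicit.
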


The left of (\ref{e:Con}) is a question in the field of convex optimization.  

This paper is organized as follows.
In the next section we prove Theorem~\ref{th:JK}. Then we shall prove Theorem~\ref{th:K} in Section~\ref{sec:3}.

\section{Proof of Theorem~\ref{th:JK}\label{sec:2}}

Prompted by Gromov's seminal work \cite{Gr85}, Ekeland and Hofer  \cite{EH89}
defined a  \textsf{symplectic capacity} on the $2n$-dimensional Euclidean space $\mathbb{R}^{2n}$
with the standard symplectic structure $\omega_0$
to be a map $c$ which associates to each subset $U\subset \mathbb{R}^{2n}$ a number
 a number $c(U)\in[0,\infty]$  satisfying the following axioms:
\begin{description}
\item[(Monotonicity)] $c(U)\le c(V)$ for $U\subset V\subset\mathbb{R}^{2n}$;
\item[(Conformality)] $c(\psi(U))=|\alpha|c(U)$ for $\psi\in{\rm Diff}(\mathbb{R}^{2n})$
such that $\psi^\ast\omega_0=\alpha\omega_0$ with $\alpha\ne 0$;
\item[(Nontriviality)] $0<c\big(B^{2n}(1)\big)$ and $c\big(Z^{2n}(1)\big)<\infty$,
where $B^{2n}(r)=\{z\in\mathbb{R}^{2n}\,|\, |z|^2<r^2\}$ and $Z^{2n}(R)=B^2(R)\times\mathbb{R}^{2n-2}$.
\end{description}
Moreover, such a symplectic capacity is called \textsf{normalized} if it also satisfies
\begin{description}
\item[(Normalization)] $c\big(B^{2n}(1)\big)=c\big(Z^{2n}(1)\big)=\pi$.
\end{description}

Starting from the representation formula for Ekeland-Hofer-Zehnder capacities  \cite{EH89, HoZe90}, P. Haim-Kislev introduced a combinatorial formula in \cite{PH19}.

\begin{lemma}[\hbox{\cite[Throrem~1.1]{PH19}}]\label{le:EHZ}
If $K$ is a convex polytope with   $(2n-1)$-dimensional facets $\{F_i\}_{i=1}^{{\bf F}_K}$,
   $n_i$ is the unit outer normal to $F_i$, and  $h_i = h_K(n_i)$ the ``\textbf{oriented height}" of $F_i$ given by
  the support function of $K$, $h_K(y) := \sup_{x \in K} \langle x,y \rangle$,
 \begin{equation}\label{e:Comb formula}
c_{\rm EHZ}(K) = \frac{1}{2} \left[ \max_{\sigma\in S_{{\bf F}_K}, (\beta_i) \in M(K)}  \sum_{1 \leq j < i \leq {\bf F}_K}{} \beta_{\sigma(i)} \beta_{\sigma(j)} \omega_0(n_{\sigma(j)},n_{\sigma(i)}) \right]^{-1},
\end{equation}
where $S_{{\bf F}_K}$ is the symmetric group on ${\bf F}_K$ letters and
$$
M(K) = \left\{ (\beta_i)_{i=1}^{{\bf F}_K} : \beta_i \geq 0, \sum_{i=1}^{{\bf F}_K} \beta_i h_i = 1, \sum_{i=1}^{{\bf F}_K} \beta_i n_i = 0 \right\}.
$$
\end{lemma}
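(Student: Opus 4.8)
The plan is to derive the combinatorial formula from the classical variational (representation) characterization of the Ekeland-Hofer-Zehnder capacity of a convex body and then carry out a finite-dimensional reduction adapted to the polytopal structure. Concretely, I would start from the fact (Ekeland-Hofer \cite{EH89}, Hofer-Zehnder \cite{HoZe90}; equivalently Clarke's dual action principle) that for a convex body $K$ with $0$ in its interior, $c_{\rm EHZ}(K)$ equals the minimal action
$$\mathcal{A}(\gamma)=\frac12\int_0^T\langle -J_{2n}\dot\gamma,\gamma\rangle\,dt$$
over closed characteristics $\gamma$ on $\partial K$, where $\gamma$ is a periodic orbit of the Hamiltonian flow of the gauge Hamiltonian $H=\tfrac12 j_K^2$ restricted to the energy level $\partial K=\{j_K=1\}$. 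The first step is the geometric observation that on the interior of a facet $F_i$, lying in the hyperplane $\{\langle x,n_i\rangle=h_i\}$, the characteristic direction is the symplectic gradient $J_{2n}n_i$ (the unique tangent direction in the kernel of $\omega_0|_{n_i^\perp}$), so any closed characteristic on the boundary of the polytope is piecewise linear, with the $k$-th segment pointing along $J_{2n}n_{\sigma(k)}$ for some ordering $\sigma$ of the visited facets.

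Next I would parametrize such a broken trajectory by its consecutive displacement vectors $c_k J_{2n}n_{\sigma(k)}$ with $c_k\ge 0$, setting $\beta_{\sigma(k)}:=c_k$, and translate the two defining conditions into the constraints of $M(K)$. Closedness of the loop, $\sum_k c_k J_{2n}n_{\sigma(k)}=0$, is equivalent (applying $J_{2n}^{-1}$) to $\sum_i\beta_i n_i=0$; and tracking the time the gauge flow spends on each facet shows that the period is $T=\sum_i\beta_i h_i$, which the normalization $\sum_i\beta_i h_i=1$ fixes to $1$. A direct computation of the action on a polygonal loop $v_0\to v_1\to\cdots\to v_m=v_0$ with $v_k-v_{k-1}=c_k J_{2n}n_{\sigma(k)}$ gives, using $\omega_0(v_{k-1},v_k-v_{k-1})=\omega_0(v_{k-1},v_k)$, the invariance $\omega_0(J_{2n}a,J_{2n}b)=\omega_0(a,b)$, and the cancellation of the base-point terms via closedness,
$$\mathcal{A}(\gamma)=\frac12\sum_{1\le j<i\le m}\beta_{\sigma(i)}\beta_{\sigma(j)}\,\omega_0(n_{\sigma(j)},n_{\sigma(i)}).$$
This is exactly the bilinear form appearing in (\ref{e:Comb formula}).

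The reciprocal and the prefactor $\tfrac12$ then emerge from the degree-two homogeneity of $H=\tfrac12 j_K^2$. Under the scaling $c_k\mapsto\lambda c_k$ the bilinear form $S:=\sum_{j<i}\beta_{\sigma(i)}\beta_{\sigma(j)}\omega_0(n_{\sigma(j)},n_{\sigma(i)})$ scales like $\lambda^2$ while the period $T=\sum_i\beta_i h_i$ scales like $\lambda$; and for a genuine periodic orbit Euler's identity forces $\mathcal{A}(\gamma)=T/2$, equivalently $S=T$. Fixing the normalization $\sum_i\beta_i h_i=1$ and then rescaling to meet $S=T$ shows that the orbit associated to a normalized shape $(\sigma,(\beta_i))$ has period $T=1/S$ and action $\mathcal{A}(\gamma)=1/(2S)$. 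Minimizing the action over closed characteristics therefore amounts to maximizing $S$ over $(\beta_i)\in M(K)$ and over the ordering $\sigma\in S_{{\bf F}_K}$, which yields exactly $c_{\rm EHZ}(K)=\tfrac12(\max S)^{-1}$.

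The part I expect to be genuinely delicate — and the analytic heart of the statement — is the passage between the infinite-dimensional variational problem and the finite-dimensional combinatorial maximum, in both directions. For the inequality $c_{\rm EHZ}(K)\ge\tfrac12(\max S)^{-1}$ one must show that a minimal characteristic exists and is piecewise linear of the form above, so that it produces a feasible pair $(\sigma,(\beta_i))$ with $S=1/(2c_{\rm EHZ}(K))$; since $\partial K$ is only piecewise smooth, this requires working with generalized (Clarke) closed characteristics and a regularity/structure argument. For the reverse inequality $c_{\rm EHZ}(K)\le\tfrac12(\max S)^{-1}$ one must show that every pair $(\sigma,(\beta_i))\in M(K)$ — including those whose facet-ordering is not geometrically embeddable in $\partial K$ — still yields an admissible loop for the action functional whose action equals $1/(2S)$, so that the capacity, being an infimum, lies below it. The point that makes this work is that the correct variational problem is the relaxed one over piecewise-linear (bounded-variation) loops rather than over embedded orbits, so no genuine realizability of the ordering is needed; verifying this relaxation and the exact evaluation of the action, together with the careful antisymmetric bookkeeping of the ordering-dependent terms $\omega_0(n_{\sigma(j)},n_{\sigma(i)})$, is where the real work lies.
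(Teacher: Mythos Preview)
The paper does not prove this lemma at all: it is quoted verbatim as \cite[Theorem~1.1]{PH19} and used as a black box, with the only additional comment being the sign-convention remark that under the author's convention $\langle x,y\rangle=\omega_0(x,J_{2n}y)$ the arguments of $\omega_0$ must be swapped relative to Haim-Kislev's original statement. There is therefore nothing in the present paper to compare your proposal against.

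For what it is worth, your outline is a reasonable summary of the strategy in \cite{PH19} itself (variational characterization via Clarke's dual action, piecewise-linear structure of generalized closed characteristics on $\partial K$, finite-dimensional reduction, homogeneity/scaling). The genuinely nontrivial points you flag---existence and piecewise-linear regularity of a minimizing generalized characteristic on a nonsmooth boundary, and the relaxation that makes every $(\sigma,(\beta_i))\in M(K)$ admissible regardless of geometric realizability---are indeed where the work lies in \cite{PH19}; your sketch also omits the reduction showing one may take each facet to appear at most once in the ordering, which is needed to land in $S_{{\bf F}_K}$ rather than an arbitrary finite word in the facets. But none of this is carried out or even sketched in the paper under review.
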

Let $\langle \cdot, \cdot\rangle_{\mathbb{R}^{2n}}$ be the standard Euclidean inner product in $\mathbb{R}^{2n}$. Under our convention $\langle x, y\rangle_{\mathbb{R}^{2n}}=\omega_0(x, J_{2n}y)$, $\omega_0(n_{\sigma(i)}, n_{\sigma(j)})$ in \cite[Throrem~1.1]{PH19}
should be changed into $\omega_0(n_{\sigma(j)}, n_{\sigma(i)})$.
\begin{proof}[Proof of Theorem\ref{th:JK}]
Since  Ekeland-Hofer-Zehnder symplectic capacity is continuous on the class of convex bodies with respect to the Hausdorff distance(cf. \cite{HoZe90}).
Moreover, every convex body can be approximated by convex polytopes in the Hausdorff metric by \cite[Theorem~1.8.13]{Sch93}.
\textsf{Hence from now on we may assume that $K$ is a convex polytope.}

Without loss of generality, we assume $0\in {\rm int}(K)$. Assume $K$ is a convex polytope in $(\mathbb{R}^{2n}, \omega_0)$ with $N$ faces $P_1, P_2, \cdots, P_N$.
Let $n_i$ be the unit outer normal  to $P_i$ and $h_i = h_K(n_i)$ the oriented height  of $n_i$.
The Ekeland-Hofer-Zehnder capacity of $K$ is
 $$
c_{\rm EHZ}(K)=\frac{1}{2}\min_{((\beta_i)_{i=1}^{N},\sigma)\in M(K)}\frac{1}{\sum_{1\leqslant
j<i\leqslant N}
\beta_{\sigma(i)}\beta_{\sigma(j)}\omega_0(n_{\sigma(j)},n_{\sigma(i)})},
$$
where
\begin{equation*}
 M(K)=\left\{((\beta_i)_{i=1}^{N},\sigma)\,\bigg|\,\begin{array}{ll}
&\beta_i\geqslant
0,\quad \sum_{i=1}^{N}\beta_ih_i=1,\quad \sum_{i=1}^{N}\beta_iJ_{2n} n_i=0,\\
&\sum_{1\leqslant
j<i\leqslant{N}}
\beta_{\sigma(i)}\beta_{\sigma(j)}\omega_0(n_{\sigma(j)},n_{\sigma(i)})>0,\;\sigma\in S_{N}
\end{array}
\right\}.
\end{equation*}

Next, we consider the Lagrangian product $K\times_L J_{2n}K$. $K\times_L J_{2n}K$ has $2N$ faces which are $\tilde{P}_1=P_1\times J_{2n}K, \tilde{P}_2=P_2\times J_{2n}K, \cdots, \tilde{P}_N=P_N\times J_{2n}K, \tilde{P}_{N+1}=K\times J_{2n}P_1, \tilde{P}_{N+2}=K\times J_{2n}P_2, \cdots, \tilde{P}_{2N}=K\times J_{2n}P_N$. The the unit outer normals to those faces are $\tilde{n}_1=(n_1,0), \tilde{n}_2=(n_2, 0), \cdots, \tilde{n}_N=(n_N,0), \tilde{n}_{n+1}=(0, J_{2n}n_1), \tilde{n}_{n+2}=(0, J_{2n}n_2), \cdots, \tilde{n}_{2N}=(0, J_{2n}n_N)$. The oriented height $\tilde{h}_i$ and $\tilde{h}_{N+i}$ of $\tilde{n}_i$ and $\tilde{n}_{N+i}$ both are $h_i$ for $i=1, 2, \cdots, N$.

For any $((\beta_i)_{i=1}^{N},\sigma)\in M(K)$, we define
$$\tilde{\beta}(i)=\left\{
           \begin{array}{ll}
             \frac{1}{2}\beta_i, &  \text{ if } i=1,\cdots, N;  \\
             \frac{1}{2}\beta_{i-N}, &  \text{ if } i=N+1,\cdots, 2N. \\
           \end{array}
         \right.$$
and
$$\tilde{\sigma}(i)=\left\{
           \begin{array}{ll}
             \sigma(\frac{i+1}{2})+N, &  \text{ if } i \text{ is odd};  \\
             \sigma(\frac{i}{2}), &  \text{ if } i\text{ is even}. \\
           \end{array}
         \right.$$
Thus, $((\tilde{\beta}_i)_{i=1}^{2N},\tilde{\sigma})\in M(K\times_L J_{2n}K)$. In fact, $\sum_{i=1}^{2n}\tilde{\beta}_i\tilde{h}_i=\sum_{i=1}^{N}\frac{1}{2}\beta_ih_i+\sum_{i=1}^{N}\frac{1}{2}\beta_ih_i=\sum_{i=1}^{N}\beta_ih_i=1$ and $\sum_{i=1}^{2N}\tilde{\beta}_i\tilde{n}_i=\sum_{i=1}^{N}\frac{1}{2}\beta_i (n_i, 0)+\sum_{i=1}^{N}\frac{1}{2}\beta_i (0, J_{2n}n_i)=0$.

Let $\tilde{\omega}_{0}$ be the symplectic form on $K\times_L J_{2n}K$.
Notice that
$$\tilde{\omega}_{0}((n_i,0), (0, J_{2n}n_j))=\langle n_i, J_{2n}n_j\rangle_{\mathbb{R}^{2n}}=-\omega_0(n_i, n_j) $$
and
$$\tilde{\omega}_0((n_i,0), (n_j, 0))=\tilde{\omega}_0((0, J_{2n}n_i), (0, J_{2n}n_j))=0,$$
we can deduce
\begin{eqnarray*}
\sum_{1\leqslant
j<i\leqslant{2N}}
\tilde{\beta}_{\tilde{\sigma}(i)}\tilde{\beta}_{\tilde{\sigma}}(j)\tilde{\omega}_0(n_{\tilde{\sigma}(j)}, n_{\tilde{\sigma}(i)})&=&
\sum_{1\leqslant
j\leqslant i\leqslant{N}}\frac{1}{4}\beta_{\sigma(i)}\beta_{\sigma(j)}\tilde{\omega}_{0}((0, J_{2n}n_{\sigma(j)}), (n_{\sigma(i)},0))\\
&&+\sum_{1\leqslant
j< i\leqslant{N}}
\frac{1}{4}\beta_{\sigma(i)}\beta_{\sigma(j)}\tilde{\omega}_{0}((0, J_{2n}n_{\sigma(j)}), (0, J_{2n}n_{\sigma(i)}))\\
&&+\sum_{1\leqslant
j< i\leqslant N}
\frac{1}{4}\beta_{\sigma(i)}\beta_{\sigma(j)}\tilde{\omega}_{0}((n_{\sigma(j)},0), (0, J_{2n}n_{\sigma(i)}))\\
&&+\sum_{1\leqslant
j<i\leqslant{N}}\frac{1}{4}\beta_{\sigma(i)}\beta_{\sigma(j)}\tilde{\omega}_{0}((0, J_{2n}n_{\sigma(j)}), (n_{\sigma(i)},0))\\
&=&\sum_{1\leqslant
j\leqslant i\leqslant N}
\frac{1}{4}\beta_{\sigma(i)}\beta_{\sigma(j)}\omega_0 (n_{\sigma(j)}, n_{\sigma(i)})\\
&&+\sum_{1\leqslant
j<i\leqslant{N}}\frac{1}{4}\beta_{\sigma(i)}\beta_{\sigma(j)}\omega_0 (n_{\sigma(j)}, n_{\sigma(i)})\\
&=&\sum_{1\leqslant
j\leqslant i\leqslant N}
\frac{1}{2}\beta_{\sigma(i)}\beta_{\sigma(j)}\omega_0 (n_{\sigma(j)}, n_{\sigma(i)})\\
\end{eqnarray*}
Therefore, for any  $((\beta_i)_{i=1}^{N},\sigma)\in M(K)$,
\begin{eqnarray*}
c_{\rm EHZ}(K\times_L JK)\leqslant \frac{1}{2}\frac{1}{\sum_{1\leqslant
j<i\leqslant{2N}}
\tilde{\beta}_{\tilde{\sigma}(i)}\tilde{\beta}_{\tilde{\sigma}}(j)\tilde{\omega}_0(n_{\tilde{\sigma}(j)}, n_{\tilde{\sigma}(i)})}=\frac{1}{\sum_{1\leqslant
j\leqslant i\leqslant N}
\beta_{\sigma(i)}\beta_{\sigma(j)}\omega_0 (n_{\sigma(j)}, n_{\sigma(i)})}.
\end{eqnarray*}
By Lemma~\ref{le:EHZ} and arbitrariness of $((\beta_i)_{i=1}^{N},\sigma)\in M(K)$,
$$c_{\rm EHZ}(K\times_L J_{2n}K)\leqslant 2c_{\rm EHZ}(K).$$

\end{proof}

\section{Proof of Theorem~\ref{th:K}\label{sec:3}}
\begin{proof}[Proof of Theorem\ref{th:K}]

The Ekeland-Hofer-Zehnder capacity of $K$ is
 $$
c_{\rm EHZ}(K)=\frac{1}{2}\min_{((\beta_i)_{i=1}^{N},\sigma)\in M(K)}\frac{1}{\sum_{1\leqslant
j<i\leqslant N}
\beta_{\sigma(i)}\beta_{\sigma(j)}\omega_0(n_{\sigma(j)},n_{\sigma(i)})},
$$
where
\begin{equation*}
 M(K)=\left\{((\beta_i)_{i=1}^{N},\sigma)\,\bigg|\,\begin{array}{ll}
&\beta_i\geqslant
0,\quad \sum_{i=1}^{N}\beta_ih_i=1,\quad \sum_{i=1}^{N}\beta_i J_{2n} n_i=0,\\
&\sum_{1\leqslant
j<i\leqslant{N}}
\beta_{\sigma(i)}\beta_{\sigma(j)}\omega_0(n_{\sigma(j)},n_{\sigma(i)})>0,\;\sigma\in S_{N}
\end{array}
\right\}.
\end{equation*}

Next, we consider the Lagrangian product $K\times_L K$. $K\times_L K$ has $2N$ faces which are $\tilde{F}_1=F_1\times K, \tilde{F}_2=F_2\times K, \cdots, \tilde{F}_N=F_N\times K, \tilde{F}_{N+1}=K\times F_1, \tilde{F}_{N+2}=K\times F_2, \cdots, \tilde{F}_{2N}=K\times F_N$. The the unit outer normals to those faces are $\tilde{n}_1=(n_1,0), \tilde{n}_2=(n_2, 0), \cdots, \tilde{n}_N=(n_N,0), \tilde{n}_{n+1}=(0, n_1), \tilde{n}_{n+2}=(0, n_2), \cdots, \tilde{n}_{2N}=(0, n_N)$. The oriented height $\tilde{h}_i$ and $\tilde{h}_{N+i}$ of $\tilde{n}_i$ and $\tilde{n}_{N+i}$ both are $h_i$ for $i=1, 2, \cdots, N$.

For any $((\beta_i)_{i=1}^{N},\sigma)\in M(K)$, we define
$$\tilde{\beta}(i)=\left\{
           \begin{array}{ll}
             \frac{1}{2}\beta_i, &  \text{ if } i=1,\cdots, N;  \\
             \frac{1}{2}\beta_{i-N}, &  \text{ if } i=N+1,\cdots, 2N. \\
           \end{array}
         \right.$$
and
$$\tilde{\sigma}(i)=\left\{
           \begin{array}{ll}
             \sigma(\frac{i+1}{2}), &  \text{ if } i \text{ is odd};  \\
             \sigma(\frac{i}{2})+N, &  \text{ if } i\text{ is even}. \\
           \end{array}
         \right.$$
Thus, $((\tilde{\beta}_i)_{i=1}^{2N},\tilde{\sigma})\in M(K\times_L K)$. In fact, 
$$\sum_{i=1}^{2n}\tilde{\beta}_i\tilde{h}_i=\sum_{i=1}^{N}\frac{1}{2}\beta_ih_i+\sum_{i=1}^{N}\frac{1}{2}\beta_ih_i=\sum_{i=1}^{N}\beta_ih_i=1$$ and 
$$\sum_{i=1}^{2N}\tilde{\beta}_i\tilde{n}_i=\sum_{i=1}^{N}\frac{1}{2}\beta_i (n_i, 0)+\sum_{i=1}^{N}\frac{1}{2}\beta_i (0, n_i)=0.$$

Let $\tilde{\omega}_{0}$ be the symplectic form on $K\times_L K$.
Notice that
$$\tilde{\omega}_{0}((n_i,0), (0, n_j))=\langle n_i, n_j\rangle_{\mathbb{R}^{2n}} $$
and
$$\tilde{\omega}_0((n_i,0), (n_j, 0))=\tilde{\omega}_0((0, n_i), (0, n_j))=0,$$
we can deduce
\begin{eqnarray*}
\sum_{1\leqslant
j<i\leqslant{2N}}
\tilde{\beta}_{\tilde{\sigma}(i)}\tilde{\beta}_{\tilde{\sigma}}(j)\tilde{\omega}_0(n_{\tilde{\sigma}(j)},n_{\tilde{\sigma}(i)})&=&\sum_{1\leqslant
j\leqslant i\leqslant N}
\frac{1}{4}\beta_{\sigma(i)}\beta_{\sigma(j)}\tilde{\omega}_{0}((n_{\sigma(j)},0), (0, n_{\sigma(i)}))\\
&&+\sum_{1\leqslant
j< i\leqslant{N}}
\frac{1}{4}\beta_{\sigma(i)}\beta_{\sigma(j)}\tilde{\omega}_{0}((n_{\sigma(j)},0), (n_{\sigma(i)},0))\\
&&+\sum_{1\leqslant
j<i\leqslant{N}}\frac{1}{4}\beta_{\sigma(i)}\beta_{\sigma(j)}\tilde{\omega}_{0}((0,n_{\sigma(j)}), (n_{\sigma(i)},0))\\
&&+\sum_{1\leqslant
j< i\leqslant{N}}
\frac{1}{4}\beta_{\sigma(i)}\beta_{\sigma(j)}\tilde{\omega}_{0}((0,n_{\sigma(j)}), (0, n_{\sigma(i)}))\\
&=&\sum_{1\leqslant
j\leqslant i\leqslant N}
\frac{1}{4}\beta_{\sigma(i)}\beta_{\sigma(j)}\tilde{\omega}_{0}((n_{\sigma(j)},0), (0, n_{\sigma(i)}))\\
&&+\sum_{1\leqslant
j<i\leqslant{N}}\frac{1}{4}\beta_{\sigma(i)}\beta_{\sigma(j)}\tilde{\omega}_{0}((0,n_{\sigma(j)}), (n_{\sigma(i)},0))\\
&=&\sum_{1\leqslant
j\leqslant i\leqslant N}
\frac{1}{4}\beta_{\sigma(i)}\beta_{\sigma(j)}\langle n_{\sigma(j)}, n_{\sigma(i)}\rangle_{\mathbb{R}^{2n}}\\
&&-\sum_{1\leqslant
j<i\leqslant{N}}\frac{1}{4}\beta_{\sigma(i)}\beta_{\sigma(j)}\langle n_{\sigma(j)}, n_{\sigma(i)}\rangle_{\mathbb{R}^{2n}}\\
&=&\frac{1}{4}\sum_{i=1}^{N}
\beta_{i}\beta_{i}\omega_0(n_i,  n_i)=\frac{1}{4}\sum_{i=1}^{N}
\beta_{i}^2>0.\\
\end{eqnarray*}
Since $\sum_{i=1}^{N}\beta_i h_i=1$,
\begin{eqnarray*}
\sum_{1\leqslant
j<i\leqslant{2N}}
\tilde{\beta}_{\tilde{\sigma}(i)}\tilde{\beta}_{\tilde{\sigma(j)}}\tilde{\omega}_0(n_{\tilde{\sigma}(j)},n_{\tilde{\sigma}(i)})=\frac{1}{4}\sum_{i=1}^{N}
\beta_{i}^2\geq \frac{1}{4}\frac{1}{\sum_{i=1}^{N}
h_{i}^2}
\end{eqnarray*}
by using Cauchy-Schwarz inequality.

Moreover, in term of Lemma~\ref{le:EHZ}, we can deduce
\begin{eqnarray*}
c_{\rm EHZ}(K\times_L K)&\leqslant& \frac{1}{2}\frac{1}{\sum_{1\leqslant
j<i\leqslant{2N}}
\tilde{\beta}_{\tilde{\sigma}(i)}\tilde{\beta}_{\tilde{\sigma}(j)}\tilde{\omega}_0(n_{\tilde{\sigma}(j)},n_{\tilde{\sigma}(i)})}\\
&=&\frac{1}{2}\frac{1}{\frac{1}{4}\frac{1}{\sum_{i=1}^{N}
h_{i}^2}}\\
&=&2\sum_{i=1}^{N}
h_{i}^2.\\
\end{eqnarray*}

\end{proof}

\medskip
\begin{tabular}{l}
Kun Shi \\
School of Mathematical Sciences, Tiangong University\\
Tianjin 300387, The People's Republic of China\\
E-mail address: kunshi@tiangong.edu.cn\\
\end{tabular}

\end{document}